\theoremstyle{plain}
\newtheorem{lema}{\sc Lemma}
\newtheorem{prop}[lema]{\sc Proposition}
\newtheorem{teo}[lema]{\sc Theorem}
\newtheorem{coro}[lema]{\sc Corollary}
\theoremstyle{definition}
\newtheorem{ej}[lema]{\sc Example}
\newtheorem{obs}[lema]{\sc Remark}
\newcommand{\less}{\lessdot}
\newcommand{\great}{\gtrdot}
\begin{document}

\title[On normalizing discrete Morse functions]{On normalizing discrete Morse functions}

\author[N. A. Capitelli]{Nicol\'as A. Capitelli\\
\textit{\scriptsize
D\MakeLowercase{epartamento de} C\MakeLowercase{iencias} B\MakeLowercase{\'asicas},  
U\MakeLowercase{niversidad} N\MakeLowercase{acional de} L\MakeLowercase{uj\'an}\\ B\MakeLowercase{uenos}
A\MakeLowercase{ires}, A\MakeLowercase{rgentina.}}}

\subjclass[2010]{05E45, 52B05}

\keywords{Discrete Morse theory, combinatorial Morse functions, discrete vector fields.}

\thanks{\textit{This work was partially supported by a Postdoc grant from CONICET}}

\thanks{\textit{E-mail address:} {\color{blue}ncapitelli@unlu.edu.ar}}

\begin{abstract} We present a way to \emph{normalize} a combinatorial Morse function into an integer-valued canonical representative of the set of discrete Morse functions inducing a given gradient field.\end{abstract}

\maketitle

\subsection*{Introduction}

In his seminal paper \cite{For}, Forman showed how to assign a combinatorial Morse function to a given (admissible) discrete vector field $W$ over a simplicial complex $K$. The value of this function on a given simplex $\sigma$ is linked to the length of gradient paths of $W$ starting in $\sigma$. In this note, we use a simplified version of Forman's construction to produce a Morse function $h:K\rightarrow\mathbb{Z}_{\geq 0}$ taking the minimum possible value for each simplex. We call one such function \emph{normalized}. Normalized Morse functions are natural representatives of all functions inducing the same gradient vector field.

Throughout, $K$ will represent a finite simplicial complex and we shall write $\sigma\prec\tau$ if $\sigma$ is an immediate face of $\tau$ in $K$ (i.e. $\dim(\sigma)=\dim(\tau)-1$). A \emph{discrete vector field} over $K$ is a map $W:K\rightarrow K\cup\{0\}$ satisfying\begin{enumerate}
\item[($W1$)] if $W(\sigma)\neq 0$ then $\sigma\prec W(\sigma)$,
\item[($W2$)] if $W(\sigma)=W(\sigma')\neq 0$ then $\sigma=\sigma'$, and
\item[($W3$)] $W^2=0$.
\end{enumerate}

The simplices in $W^{-1}(0)\setminus W(K)$ are called \emph{critical}. A \emph{combinatorial Morse function} over $K$ is a map $f:K\rightarrow\mathbb{R}$ satisfying for every $\sigma\in K$\begin{enumerate}\label{morseconditions}
\item[($M1$)] $|\{\eta\prec\sigma\,|\,f(\eta)\geq f(\sigma)\}|\leq 1$ and
\item[($M2$)] $|\{\tau\succ\sigma\,|\, f(\tau)\leq f(\sigma)\}|\leq 1$.
\end{enumerate}
Here $|X|$ denotes the cardinality of the set $X$. The \emph{gradient vector field} of a combinatorial Morse function $f$ is the discrete vector field $V_f$ over $K$ defined by $$V_f(\sigma)=\left\{\begin{array}{ll}\tau&\text{if $\tau\succ\sigma$ and $f(\sigma)\geq f(\tau)$}\\ 0&\text{otherwise.}\end{array}\right.$$
A simplex $\sigma$ is \emph{critical} for $f$ if both cardinalities in ($M1$) and ($M2$) are zero (equivalently, $\sigma$ is critical for $V_f$). Two Morse functions $f$ and $g$ are said to be \emph{equivalent} if they induce the same gradient vector field (or equivalently, $f(\sigma)<f(\tau)$ if and only if $g(\sigma)<g(\tau)$ for every $\sigma\prec\tau$; see e.g. \cite{Vil}).
Given a discrete vector field $W$ over $K$, a \emph{path of index $k$} (relative to $W$) is a sequence of $k$-simplices $\sigma_0,\ldots,\sigma_r\in W^{-1}(K)$ such that $\sigma_i\neq\sigma_{i+1}$ and $\sigma_{i+1}\prec W(\sigma_i)$ for all $0\leq i\leq r-1$. The path is \emph{closed} if $\sigma_0=\sigma_r$ and \emph{non-stationary} if $r>0$. By \cite[Theorem 9.3]{For}, $W$ is the gradient vector field of a combinatorial Morse function if and only if $W$ has no non-stationary closed paths (admissible).

\subsection*{The function underlying a gradient field} For an admissible vector field $W$ over $K$  define a relation $\less$ on $K$ by taking the transitive closure of the following \emph{atomic relations}:
\begin{enumerate}
\item $\sigma\less\tau$ if $\sigma\prec\tau$, and
\item $\sigma\great\tau$ if also $\tau=W(\sigma)$.\end{enumerate}

\noindent We shall write $\sigma\dot{\sim}\tau$ whenever $\sigma\less\tau$ and $\sigma\great\tau$. Define the \emph{height} of a simplex $\sigma$ in $(K,\less)$ as $$h(\sigma):=\max\{n\in\mathbb{Z}_{\geq 0}\,|\,\exists\,\, \sigma=\sigma_n\great \sigma_{n-1}\great\cdots\great \sigma_0,\, \sigma_i\dot{\nsim}\sigma_{i+1}\}.$$ We call $h:K\rightarrow\mathbb{Z}_{\geq 0}$ the \emph{height function associated to $W$}.

\begin{ej} The \emph{dimension function} $\dim(-):K\rightarrow\mathbb{Z}_{\geq 0}$ is the height function of the null vector field.\end{ej}

\begin{ej} For the boundary of the $2$-simplex on the vertices $\{a,b,c\}$, Figure \ref{figure} shows the height function of the vector field $W$ defined by $W(a)=ab$, $W(b)=bc$ and $W=0$ otherwise.

\begin{figure}[h]
\centering
\includegraphics[scale=0.6]{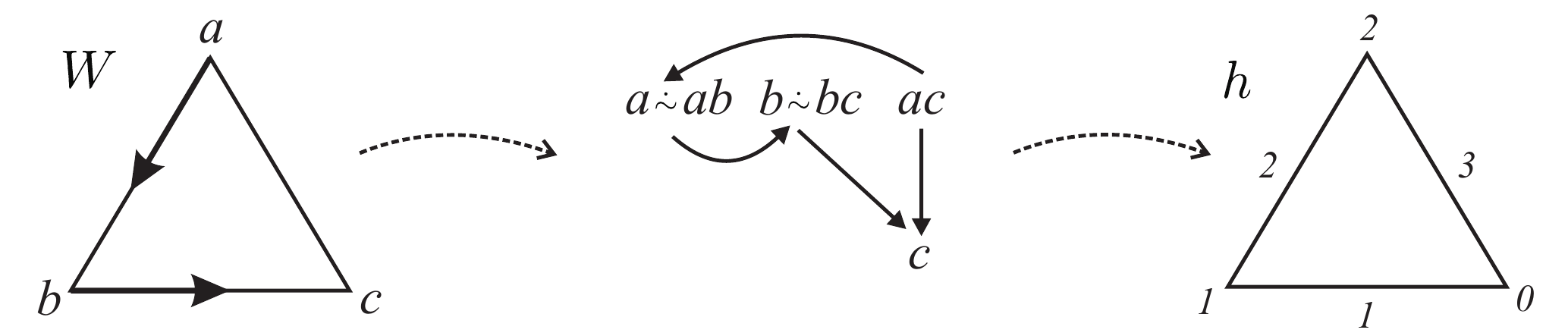}
\caption{Computing the height function $h$ for a vector field $W$.}
\label{figure}
\end{figure}\end{ej}

\begin{teo}\label{teo:ismorse} With the notations as above, $h$ is combinatorial Morse function over $K$ with gradient vector field $W$.\end{teo}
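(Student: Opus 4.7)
The plan is to view $\lessdot$ as a preorder on $K$, with $\dot\sim$ as its associated equivalence relation, and to prove the key structural fact that the $\dot\sim$-classes are precisely the $W$-pairs $\{\sigma,W(\sigma)\}$ together with the singletons consisting of simplices not involved in any $W$-pair. Once this is in place, $h(\sigma)$ is just the height of $[\sigma]$ in the finite quotient poset $K/\dot\sim$, and both the Morse conditions and the identity $V_h=W$ fall out from how atomic $\lessdot$-steps interact with $\prec$.

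First I would dispatch the easy direction: whenever $W(\sigma)\neq 0$, both atomic relations (1) and (2) apply between $\sigma$ and $W(\sigma)$, so $\sigma\dot\sim W(\sigma)$. The hard part---and the main technical obstacle---is the converse. Given $\sigma\dot\sim\tau$ with $\sigma\neq\tau$, concatenating chains witnessing $\sigma\lessdot\tau$ and $\tau\lessdot\sigma$ produces a closed cycle $\tau_0\lessdot\tau_1\lessdot\cdots\lessdot\tau_m=\tau_0$ of atomic $\lessdot$-steps, each of which is either of ``type A'' ($\rho\prec\rho'$, dimension $+1$) or ``type B'' ($\rho'\prec\rho=W(\rho')$, dimension $-1$). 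Two consecutive type-B steps would force $W^2\neq 0$, and the net dimension change around a cycle is zero, so the pattern must be alternating $ABAB\cdots AB$ of some even length $2n$.

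For $n=1$ the cycle is exactly a $W$-pair round-trip, from which $\tau=W(\sigma)$ is immediate. For $n\geq 2$, I would write the even-indexed terms as $\alpha_0,\alpha_1,\ldots,\alpha_n=\alpha_0$ (all of common dimension $k$) and the odd-indexed ones as $\beta_i$, obtaining $\beta_i=W(\alpha_{i+1})$ with $\alpha_i\prec\beta_i$. If some $\alpha_i=\alpha_{i+1}$ the cycle contains a trivial $W$-pair round-trip that can be excised, producing a shorter alternating cycle still passing through $\sigma$ and $\tau$ (after identifying any removed $\beta_i=W(\alpha_i)$ with the surviving $\alpha_i$ via $\beta_i\dot\sim\alpha_i$); I would close this case by induction on $n$. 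Otherwise all consecutive $\alpha_i$ are distinct, and reading them in reverse $\alpha_n=\alpha_0,\alpha_{n-1},\ldots,\alpha_1,\alpha_0$ exhibits a non-stationary closed $W$-path of index $k$, contradicting admissibility.

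With this lemma secured the rest is routine bookkeeping. For any $\sigma\prec\tau$ one has the atomic relation $\sigma\lessdot\tau$, and by the lemma $[\sigma]=[\tau]$ iff $\tau=W(\sigma)$; hence $h(\sigma)=h(\tau)$ when $\tau=W(\sigma)$, and $h(\sigma)<h(\tau)$ otherwise. Consequently $V_h(\sigma)=\tau$ precisely when $\tau=W(\sigma)$, so $V_h=W$. Conditions (M1) and (M2) follow as well: if $\eta\prec\sigma$ satisfies $h(\eta)\geq h(\sigma)$ then $\sigma=W(\eta)$, and such $\eta$ is unique by (W2); symmetrically the unique $\tau\succ\sigma$ with $h(\tau)\leq h(\sigma)$ is $W(\sigma)$, when this is nonzero.
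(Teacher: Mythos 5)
Your proof is correct and follows essentially the same route as the paper's: the heart in both cases is the lemma that a nontrivial $\dot{\sim}$-equivalence forces a $W$-pairing, proved by noting that $W^2=0$ makes the chain of atomic relations alternate in dimension so that the equidimensional subsequence would otherwise form a non-stationary closed path. Your write-up is somewhat more careful in two spots the paper glosses over --- you treat arbitrary equivalent pairs rather than only comparable ones, and you excise repeated simplices before invoking admissibility --- but the strategy is the same.
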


\begin{proof} First we show that if $\sigma\prec\tau$ are such that $\sigma\dot{\sim}\tau$ then $W(\sigma)=\tau$. Indeed, let $$\sigma=\eta_0\great\eta_1\great\cdots\great\eta_r=\tau$$ be a chain of atomic relations. Since $\dim(\sigma)=\dim(\tau)-1$ and $\eta_{i+1}$ is either an immediate face or coface of $\eta_i$ then $r$ must be odd and $$|\{0\leq i\leq r-1\,|\,\eta_{i+1}\prec\eta_i\}|=|\{0\leq i\leq r-1\,|\,\eta_i\prec\eta_{i+1}\}|+1.$$ Assume $r\geq 3$. Note that we cannot have $\eta_i\prec\eta_{i+1}\prec\eta_{i+2}$ for it would contradict that $W^2=0$. So we must have $\eta_i\prec\eta_{i+1}\succ\eta_{i+2}$ (with $W(\eta_i)=\eta_{i+1}$) for all odd $i$ and $\eta_i\succ\eta_{i+1}\prec\eta_{i+2}$ (with $W(\eta_{i+1})=\eta_{i+2}$) for all even $i$. This gives rise to a non-stationary closed path of index $\dim(\sigma)$, contrary to the hypotheses. We conclude that $r=1$ and thus $W(\sigma)=\tau$.

Suppose now that $\tau\succ\sigma$ satisfies $h(\tau)\leq h(\sigma)$. If $\tau\dot{\nsim}\sigma$ then adding $\tau$ to a chain that realizes the height of $\sigma$ gives us $h(\tau)\geq h(\sigma)+1$, a contradiction. So $\tau\dot{\sim}\sigma$ and $\tau=W(\sigma)$ is the only immediate co-face of $\sigma$ satisfying $h(\tau)\leq h(\sigma)$. On the other hand, if $\eta\prec\sigma$ is such that $h(\eta)\geq h(\sigma)$ then $W(\eta)=\sigma$ and $\eta$ is the only immediate face of $\sigma$ satisfying this by ($W2$). This proves that $h$ fulfills ($M1$) and ($M2$).

Finally, it is straightforward to see that $V_h=W$.\end{proof}

As mentioned earlier, the function $h$ is nothing but a simplified version of Forman's construction of the discrete Morse function produced in the proof of \cite[Theorem 9.3]{For}. Forman uses a more complex (inductive) argument in order to get some desirable extra properties for this function. Theorem \ref{teo:ismorse} provides a simpler proof to the fact that $W$ is a gradient field.

\begin{obs}\label{Obs:equivalencetrio} It is straightforward to check that if $\sigma\dot{\sim}\eta\dot{\sim}\varphi$ is an equivalence of \emph{atomic} relations ($\sigma,\varphi\neq\eta$) then necessarily $\sigma=\varphi$.\end{obs}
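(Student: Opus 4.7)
My plan is to unpack the meaning of an atomic $\dot{\sim}$-relation and then finish by a short case split on the resulting $W$-pairs. Looking at the atomic rules, rule (2) is the unique source of an atomic $\great$-relation, and its companion atomic $\less$-relation is supplied by rule (1) automatically whenever $\tau = W(\sigma)$. Consequently, for distinct simplices $\sigma,\eta$, the atomic version of $\sigma\dot{\sim}\eta$ is equivalent to saying that $\{\sigma,\eta\}$ is a gradient pair of $W$, i.e.\ either $W(\sigma)=\eta$ or $W(\eta)=\sigma$. So the hypothesis translates to: both $\{\sigma,\eta\}$ and $\{\eta,\varphi\}$ are $W$-pairs.

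There are four ways to orient these two $W$-pairs. The two ``composable'' orientations, $W(\sigma)=\eta$ with $W(\eta)=\varphi$ and $W(\varphi)=\eta$ with $W(\eta)=\sigma$, give $W^2(\sigma)=\varphi$ and $W^2(\varphi)=\sigma$ respectively; since $\sigma,\varphi\neq\eta$ these values are non-zero, contradicting $(W3)$. In the remaining two orientations the simplex $\eta$ sits on the same side of $W$ in both pairs: either $W(\sigma)=W(\varphi)=\eta$, in which case $(W2)$ yields $\sigma=\varphi$, or $W(\eta)=\sigma$ and $W(\eta)=\varphi$, in which case $\sigma=\varphi$ because $W$ is a function. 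The only item to get right is the enumeration of the four orientations (and noticing that it is precisely the assumption $\sigma,\varphi\neq\eta$ that rules out the composable cases via $(W3)$); once this is done the axioms $(W1)$–$(W3)$ close the argument immediately, which is exactly why the author calls it straightforward.
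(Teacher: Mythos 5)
Your proof is correct and is precisely the verification the paper leaves implicit (the remark is stated without proof): an atomic equivalence between distinct simplices is exactly a $W$-pair, and the four possible orientations of the two pairs are disposed of by $(W3)$, $(W2)$, and the fact that $W$ is a function. One small point of attribution: in the two composable orientations the contradiction with $(W3)$ comes simply from $W^2(\sigma)=\varphi\in K$ (hence $\neq 0$), not from the hypothesis $\sigma,\varphi\neq\eta$, whose role is only to guarantee that both atomic equivalences are genuine $W$-pairs in the first place.
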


\subsection*{The normalization of a Morse function} Let $f:K\rightarrow\mathbb{R}$ be a discrete Morse function and let $h_f:K\rightarrow \mathbb{Z}_{\geq 0}$ be the height function associated to $V_f$. We call $h_f$ the \emph{normalization of $f$}. This function is a natural representative of the equivalence class of $f$ in the following way.

\begin{prop}\label{Prop:MinimalityConditionOfh} For every $\sigma\in K$, $h_f(\sigma)\leq g(\sigma)$ whenever $g:K\rightarrow\mathbb{Z}_{\geq 0}$ is a combinatorial Morse function equivalent to $f$.\end{prop}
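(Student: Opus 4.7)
The plan is to argue by induction on $n := h_f(\sigma)$. The base case $n=0$ is immediate since $g$ takes values in $\mathbb{Z}_{\geq 0}$.

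For the inductive step I would fix a chain $\sigma=\sigma_n\great\sigma_{n-1}\great\cdots\great\sigma_0$ with $\sigma_i\dot{\nsim}\sigma_{i+1}$ that realizes $h_f(\sigma)=n$, and focus on its top atomic step $\sigma_n\great\sigma_{n-1}$. The key structural observation is that this step must come from atomic relation~(1), i.e.\ $\sigma_{n-1}\prec\sigma_n$: if it came from atomic relation~(2) we would have $\sigma_{n-1}=V_f(\sigma_n)$, in which case the two atomic relations would yield $\sigma_n\dot{\sim}\sigma_{n-1}$ directly, contradicting the chain condition. For the same elementary reason, $V_f(\sigma_{n-1})\neq\sigma_n$, since $V_f(\sigma_{n-1})=\sigma_n$ would again make $\sigma_{n-1}\dot{\sim}\sigma_n$ immediate.

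Next I would pin down $h_f(\sigma_{n-1})$ exactly. The sub-chain shows $h_f(\sigma_{n-1})\geq n-1$, while the argument used in the proof of Theorem~\ref{teo:ismorse}—adding $\sigma_n$ on top of a realizing chain for $\sigma_{n-1}$, which is legitimate precisely because $\sigma_{n-1}\prec\sigma_n$ and $\sigma_{n-1}\dot{\nsim}\sigma_n$—gives $h_f(\sigma_n)\geq h_f(\sigma_{n-1})+1$. Combining these yields $h_f(\sigma_{n-1})=n-1$, so the inductive hypothesis applies and $g(\sigma_{n-1})\geq n-1$.

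To close, since $\sigma_{n-1}\prec\sigma_n$ and $V_f(\sigma_{n-1})\neq\sigma_n$, the definition of the gradient vector field forces $f(\sigma_{n-1})<f(\sigma_n)$; by the equivalence of $f$ and $g$ this transfers to $g(\sigma_{n-1})<g(\sigma)$, and the integrality of $g$ then gives $g(\sigma)\geq g(\sigma_{n-1})+1\geq n=h_f(\sigma)$. The main obstacle I foresee is simply the top-step analysis: one must see cleanly that the $\dot{\nsim}$ condition both excludes the $W$-paired form of the atomic step \emph{and} forbids $V_f(\sigma_{n-1})=\sigma_n$. Once that is granted, the induction itself is a short bookkeeping exercise using only the defining inequality of $V_f$ and the integrality of $g$.
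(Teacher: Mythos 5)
Your argument breaks down at the ``key structural observation.'' The chain $\sigma=\sigma_n\great\sigma_{n-1}\great\cdots\great\sigma_0$ realizing $h_f(\sigma)$ is a chain in the \emph{transitive closure} of the atomic relations, not a chain of atomic relations, so the top step $\sigma_n\great\sigma_{n-1}$ need not be of the form $\sigma_{n-1}\prec\sigma_n$. The paper's own Example 2 already defeats this: for the boundary of the $2$-simplex on $\{a,b,c\}$ with $W(a)=ab$ and $W(b)=bc$, the height $h(a)=2$ is realized only by chains such as $a\great b\great c$, whose top step joins a \emph{vertex} to a vertex ($b\less a$ holds because $b\prec ab$ and $ab\less a$ via $W(a)=ab$, but certainly $b\not\prec a$); there is no length-$2$ chain of atomic, non-equivalent steps starting at $a$. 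Consequently your concluding deduction $f(\sigma_{n-1})<f(\sigma_n)$, hence $g(\sigma_{n-1})<g(\sigma)$, has no basis: $\sigma_{n-1}$ and $\sigma_n$ need not be incident in the face order at all.

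The correct decomposition, which the paper establishes from Remark \ref{Obs:equivalencetrio} and the maximality of the chain, is that a reduced atomic chain realizing $\sigma\great\sigma_{n-1}$ has the form $\sigma\dot{\sim}\eta\great\eta'\dot{\sim}\sigma_{n-1}$ with $\eta'\prec\eta$ and $f(\eta')<f(\eta)$. With that in hand your induction could be restarted---$h_f(\eta')=h_f(\sigma_{n-1})=n-1$ gives $g(\eta')\geq n-1$, hence $g(\eta)\geq n$---but you would then still need $g(\sigma)\geq g(\eta)$. This is automatic when $\eta=\sigma$ or when $\sigma\prec\eta=V_f(\sigma)$, but when $\eta\prec\sigma$ with $V_f(\eta)=\sigma$ the pairing gives the inequality in the \emph{wrong} direction ($g(\eta)\geq g(\sigma)$), and one must show that this configuration contradicts the maximality of the realizing chain. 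That case analysis (the two bulleted sub-cases and Figure \ref{Democaracterizacion}) is the main technical content of the paper's proof and is entirely absent from your proposal. So there are two genuine gaps: a false structural claim about the top step, and the missing argument ruling out $\eta\prec\sigma$.
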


\begin{proof} Suppose otherwise and let $\sigma\in\{\eta\in K\,:\,g(\eta)<h_f(\eta)\}$ for which $g(\sigma)$ is minimum (then $h_f(\sigma)=r\geq 1$). Let $C$ be a chain $\sigma=\sigma_r\great \sigma_{r-1}\great\cdots\great\sigma_0$ realizing $h_f(\sigma)$ and note that $h_f(\sigma_{r-1})=h_f(\sigma)-1$. By Remark \ref{Obs:equivalencetrio} and the maximality of $C$, any \emph{reduced} chain of atomic relations realizing $\sigma\great\sigma_{r-1}$ (i.e. not containing repeated simplices) must be of the form $\sigma\dot{\sim}\eta\great\eta'\dot{\sim}\sigma_{r-1}$, with $\eta' \dot{\nsim} \eta$. Hence $\eta'\prec\eta$ and $f(\eta')<f(\eta)$.

By the equivalence $h_f\sim f\sim g$ we have $g(\eta')<g(\eta)$. We claim that $g(\eta)\leq g(\sigma)$. By the equivalence $f\sim g$ this is straightforward if $\eta\succ\sigma$, so we must discard the case $\eta\prec\sigma$ (with $f(\sigma)\leq f(\eta)$). There are two sub-cases for this case (which arise from the possibilities for $\sigma_{r-1}\dot{\sim}\eta'$):
                                                        \begin{itemize}
                                                        \item If $\sigma_{r-1}\prec\eta'$ with $f(\sigma_{r-1})\geq f(\eta')$ then  $\sigma_{r-1}\prec\eta'\prec\eta\prec\sigma$. Let $\rho$ be the only other simplex aside $\eta'$ that fulfills $\sigma_{r-1}\prec\rho\prec\eta$. Then the chain of atomic relations $\sigma_{r-1}\less\rho\less\eta\dot{\sim}\sigma$ contradicts the maximality of $h_f(\sigma)$.
                                                        \item If $\sigma_{r-1}\succ\eta'$ with $f(\sigma_{r-1})\leq f(\eta')$ then  $\sigma_{r-1}\succ\eta'\prec\eta\prec\sigma$. Whether $\sigma_{r-1}$ is or is not an immediate face of $\sigma$, we reach the contradictions to the maximality of $h_f$ shown in Figure \ref{Democaracterizacion}.\end{itemize}

\begin{figure}[h]
  \centering
    \includegraphics[scale=0.5]{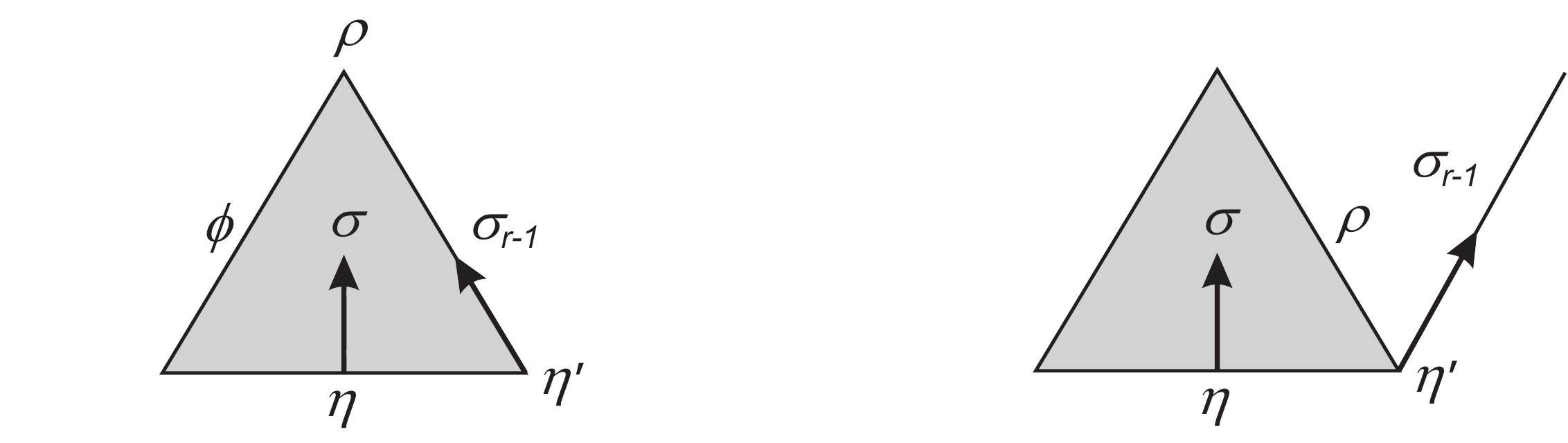}
        \caption{\textbf{On the left.} In the case $\sigma_{r-1}\prec\sigma$, the chain $\sigma_{r-1}\prec\rho\prec\phi\prec \sigma$ contradicts the maximality of $h_f(\sigma)$. \textbf{On the right.} In the case that $\sigma_{r-1}$ is not an immediate face of $\sigma$, the chain $\sigma_{r-1}\dot{\sim}\eta'\prec\rho\prec \sigma$ contradicts the maximality of $h_f(\sigma)$.}
        \label{Democaracterizacion}
\end{figure}

\noindent This proves the claim. Finally, by the minimality condition of $\sigma$ we have $$h_f(\sigma_{r-1})= h_f(\eta')\leq g(\eta')< g(\sigma)<h_f(\sigma),$$ from where $h_f(\sigma)-h_f(\sigma_{r-1})\geq 2$, a contradiction.\end{proof}

\begin{coro} If $f$ and $g$ are equivalent Morse functions then $h_f=h_g$.\end{coro}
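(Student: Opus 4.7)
The plan is to derive the corollary as a direct, symmetric application of Proposition \ref{Prop:MinimalityConditionOfh}. The key observation is that the normalization of any Morse function is itself an $\mathbb{Z}_{\geq 0}$-valued Morse function lying in the same equivalence class, which means it is a legal ``test function'' to feed into the minimality statement.

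First I would verify the legality of the two test functions. By Theorem \ref{teo:ismorse}, $h_g:K\rightarrow\mathbb{Z}_{\geq 0}$ is a combinatorial Morse function with gradient vector field $V_{h_g}=V_g$, hence $h_g$ is equivalent to $g$. Since equivalence is transitive and $f\sim g$ by assumption, we obtain $h_g\sim f$. An entirely symmetric argument shows that $h_f\sim g$ and $h_f:K\rightarrow\mathbb{Z}_{\geq 0}$. Then I would apply Proposition \ref{Prop:MinimalityConditionOfh} once with the hypothesis function taken to be $h_g$: this gives $h_f(\sigma)\leq h_g(\sigma)$ for every $\sigma\in K$. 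Applying the proposition again with the roles of $f$ and $g$ exchanged, and hypothesis function $h_f$, yields the reverse inequality $h_g(\sigma)\leq h_f(\sigma)$. Combining the two pointwise inequalities gives $h_f=h_g$.

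There is essentially no obstacle: the combinatorial content is packaged inside Proposition \ref{Prop:MinimalityConditionOfh}, and the corollary reduces to checking that $h_f$ and $h_g$ are admissible comparison functions, which is immediate from Theorem \ref{teo:ismorse}. (One could alternatively note that the relation $\less$, and hence $h_f$, depends only on the gradient vector field $V_f$, and that $V_f=V_g$ is precisely the definition of equivalence; this would make the corollary essentially tautological. I prefer the proposition-based route because it better conveys why $h_f$ is a canonical representative of its class rather than merely a function constructed from the same data.)
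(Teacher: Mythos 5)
Your proof is correct and matches the paper's intent: the corollary is stated without proof as an immediate consequence of Proposition \ref{Prop:MinimalityConditionOfh}, obtained exactly by the symmetric double application you describe (and, as you note, it also follows at once from the fact that $h_f$ depends only on $V_f=V_g$). Nothing is missing.
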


The condition in the statement of Proposition \ref{Prop:MinimalityConditionOfh} is trivially seen to be an alternative definition for $h_f$. Thus, the normalization of $f$ is the non-negative integer-valued Morse function equivalent to $f$ taking the smallest possible values over all simplices. This justifies the term \emph{normalized}.

We collect some basic properties of $h_f$ in the following
\begin{lema} \label{Lemma:PropertiesOfh} With the notations as above, the function $h_f$ satisfies:\begin{enumerate}
\item\label{(1)} $h_f(\sigma)\geq\dim(\sigma)$ for all $\sigma\in K$.
\item\label{(2)} $h_f(\sigma)=0$ if and only if $\sigma$ is a critical vertex.
\item\label{(3)} If $\tau=V_f(\sigma)$ then $h_f(\sigma)=h_f(\tau)$.
\item\label{(4)} $h$ takes all the integer values between $0$ and $\max_{\sigma\in K}\{h_f(\sigma)\}$.
\end{enumerate}\end{lema}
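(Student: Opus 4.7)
My plan is to prove the four parts in the order \eqref{(1)}, \eqref{(3)}, \eqref{(2)}, \eqref{(4)}, because \eqref{(2)} will follow almost immediately once \eqref{(1)} and \eqref{(3)} are in hand, while \eqref{(4)} is an independent chain-manipulation argument.

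For \eqref{(1)} I induct on $d=\dim(\sigma)$. The base case $d=0$ is vacuous; for $d\geq 1$ the simplex $\sigma$ has $d+1\geq 2$ immediate faces, and by ($W2$) at most one of them is the (possibly empty) $V_f$-preimage of $\sigma$. So I can pick $\sigma'\prec\sigma$ with $V_f(\sigma')\neq\sigma$, and the first paragraph of the proof of Theorem~\ref{teo:ismorse} then forces $\sigma\dot{\nsim}\sigma'$. Prepending $\sigma$ to a chain of length $d-1$ realizing $h_f(\sigma')$ (which exists by induction) produces a chain of length $d$ for $\sigma$.

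For \eqref{(3)}, I exploit that $\tau=V_f(\sigma)$ immediately yields $\sigma\dot{\sim}\tau$ from the two atomic relations, and that $\dot{\sim}$ is transitive since both $\less$ and $\great$ are. Given a chain $\sigma=\sigma_n\great\sigma_{n-1}\great\cdots\great\sigma_0$ realizing $h_f(\sigma)$, I swap the top for $\tau$: the relation $\tau\great\sigma_{n-1}$ follows by transitivity from $\sigma_{n-1}\less\sigma\less\tau$, and the non-equivalence $\tau\dot{\nsim}\sigma_{n-1}$ holds because $\tau\dot{\sim}\sigma_{n-1}$ together with $\sigma\dot{\sim}\tau$ would imply $\sigma\dot{\sim}\sigma_{n-1}$ by transitivity, contradicting the original chain. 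Hence $h_f(\tau)\geq h_f(\sigma)$, and the symmetric argument gives equality. Part \eqref{(2)} is then quick: for $\Leftarrow$, a critical vertex $\sigma$ admits no atomic relation $\eta\less\sigma$ with $\eta\neq\sigma$ (atomic~(1) needs a face of $\sigma$, and atomic~(2) needs $\eta=V_f(\sigma)=0$), so the only $\eta$ satisfying $\eta\less\sigma$ is $\sigma$ itself and no chain of positive length starts at $\sigma$; for $\Rightarrow$, \eqref{(1)} forces $\dim(\sigma)=0$, and if $\sigma$ were non-critical then $\tau=V_f(\sigma)$ would be an edge with $h_f(\tau)=h_f(\sigma)=0$ by \eqref{(3)}, contradicting \eqref{(1)} applied to $\tau$.

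Finally, for \eqref{(4)} I choose $\sigma$ with $h_f(\sigma)=M:=\max_{\eta\in K}h_f(\eta)$ together with a chain $\sigma=\sigma_M\great\sigma_{M-1}\great\cdots\great\sigma_0$ realizing $M$, and verify that $h_f(\sigma_k)=k$ for every $k$. The truncated chain shows $h_f(\sigma_k)\geq k$, while a strict inequality would let me splice a longer chain for $\sigma_k$ into the original one to produce a chain for $\sigma$ of length greater than $M$, contradicting maximality. The main obstacle I anticipate lies in \eqref{(3)}: one must track the transitivity of $\dot{\sim}$ carefully to justify that swapping the top of a chain preserves both the $\great$ relations and the non-equivalence conditions, and the splicing argument in \eqref{(4)} relies on the same manipulation.
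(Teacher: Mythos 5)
Your proof is correct. Parts \eqref{(1)}--\eqref{(3)} are careful expansions of exactly the route the paper gestures at: for \eqref{(1)} and \eqref{(2)} the paper's one-line justification rests on the same observation you use, namely that a positive-dimensional simplex has at least two immediate faces, at most one of which can be matched upward to it by ($W2$), so one can always descend through a face $\sigma'$ with $\sigma\dot{\nsim}\sigma'$; and the paper dismisses \eqref{(3)} as ``a direct consequence of the definition,'' which your top-swapping argument (using that $\dot{\sim}$ is transitive because $\less$ and $\great$ are transitive closures) makes precise. The genuine divergence is in part \eqref{(4)}: the paper deduces it from Proposition \ref{Prop:MinimalityConditionOfh} --- if some value $0<v<M$ were skipped, subtracting $1$ from every value above $v$ would produce an equivalent $\mathbb{Z}_{\geq 0}$-valued Morse function strictly below $h_f$ somewhere, contradicting minimality --- whereas you prove it directly by showing that along a chain realizing the maximum $M$ one has $h_f(\sigma_k)=k$ for every $k$ (truncation gives $h_f(\sigma_k)\geq k$, splicing a longer chain for $\sigma_k$ back into the original gives $h_f(\sigma_k)\leq k$). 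Your version is more elementary and self-contained, since it does not rely on Proposition \ref{Prop:MinimalityConditionOfh}; the paper's is shorter once that proposition is available. Both arguments are valid.
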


\begin{proof} \eqref{(1)} and \eqref{(2)} follow from the fact that any simplex of positive dimension has at least two immediate faces, one of which must be a critical face. \eqref{(3)} is a direct consequence of the definition of the height function and \eqref{(4)} follows from Proposition \ref{Prop:MinimalityConditionOfh}.\end{proof}

\subsection*{Computing $h_f$} Finally, we present two algorithms for computing normalized functions: one from acyclic matchings over the Hasse diagram of the complex and one from the values of a given Morse function.

Recall that the \emph{Hasse diagram} $\mathcal{H}_K$ of a complex $K$ is the digraph whose vertices are the simplices of $K$ and whose directed edges $\tau\rightarrow\sigma$ arise for every $\tau\succ\sigma$. The pairing of simplices defined by a gradient vector field $V$ determines a matching $\mathcal{M}_V$ in $\mathcal{H}_K$. If the arrows in this matching are reversed in $\mathcal{H}_K$, the resulting digraph $\mathcal{H}_{\mathcal{M}}$ is acyclic. On the other hand, an \emph{acyclic matching} $\mathcal{M}$ over $\mathcal{H}_K$ determines a discrete gradient field $V_{\mathcal{M}}$ on $K$ where $V_{\mathcal{M}}(\sigma)=\tau$ if and only if the edge $\overline{\sigma\tau}\in\mathcal{M}$ (see e.g. \cite{Cha}).

\begin{algorithm}
\caption{\emph{Normalization from an acyclic matching (SAGE)}}\label{algoSAGE}
\flushleft \footnotesize In the following code, the argument \texttt{D} is the \emph{dictionary of covering vertices} of the face poset: a vertex is assigned the list of vertices covering it. A typical entry is e.g. ``\texttt{2:[3,7,13]}". Entries of maximal vertices/simplices appear e.g. as ``\texttt{30:[  ]}". On the other hand, the argument \texttt{M} is the list of paired vertices given by the acyclic matching over \texttt{D}. A typical entry in \texttt{M} is e.g. ``\texttt{[6,27]}", which is also a list.\normalsize
\begin{verbatim}


def Normalization(D,M):
    for i in range(len(M)):
        D[M[i][0]].remove(M[i][1])
        D[M[i][1]].append(M[i][0])
    G=DiGraph(D)
    V=len(G.vertices())
    for j in range(V):
        n=0
        L=G.all_simple_paths(ending_vertices=[j])
        if L!=[]:
            values=set()
            for l in L:
                k=0
                l.reverse()
                for i in range(len(l)-1):
                    if [l[i],l[i+1]] not in M:
                        k=k+1
                values.add(k)
            n=max(values)
            print 'Simplex', j, 'takes the value', n

\end{verbatim} 
\end{algorithm}

The height function associated to an acyclic matching can be characterized in terms of the number of edges in simple paths not belonging to the matching.

\begin{prop}\label{propo:hasse} Let $V$ be a gradient field over $K$, $\mathcal{M}$ the associated matching and let $P_{\sigma}$ stand for the set of (directed) simple paths in $\mathcal{H}_{\mathcal{M}}$ starting in $\sigma$. Then, $$h(\sigma)=\max_{p\in P_{\sigma}}|\{\text{edges in $p$ not in $\mathcal{M}_V$}\}|.$$\end{prop}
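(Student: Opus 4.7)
The plan is to prove both inequalities, writing $N(\sigma)$ for the right-hand side of the identity.

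For $h(\sigma)\geq N(\sigma)$: given a simple path $p=(\sigma=t_0,\ldots,t_m)$ in $\mathcal{H}_{\mathcal{M}}$ with $k$ edges outside $\mathcal{M}_V$, I would manufacture a chain of length $k$ realizing $h(\sigma)$. Edges of $\mathcal{M}_V$ join matched pairs and therefore preserve $\dot{\sim}$-classes, while edges not in $\mathcal{M}_V$ join a cofacet to a facet which is not its match; by Remark \ref{Obs:equivalencetrio} the two endpoints then lie in distinct $\dot{\sim}$-classes. So the vertex sequence of $p$ splits into $k+1$ maximal \emph{runs} $R_0,\ldots,R_k$ of constant class, each of cardinality at most two (a size-two run arises from the matched edge within the class). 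Setting $\sigma_k=\sigma$ and $\sigma_{k-j}:=$ last vertex of $R_j$ for $j\geq 1$, I would verify that $\sigma_{k-j+1}\great\sigma_{k-j}$ holds in the transitive closure: the non-matched edge from the last vertex of $R_{j-1}$ to the first vertex $\nu_j$ of $R_j$ yields an atomic $\less$ via (1); a matched edge inside $R_j$ (when $|R_j|=2$) yields an atomic $\less$ via (2); and for $j=1$ with $|R_0|=2$ the relation $W(\sigma)\less\sigma$ (atomic (2)) puts $\sigma$ on top. Consecutive entries of the chain lie in distinct runs, hence are $\dot{\nsim}$, so the chain is valid.

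For $h(\sigma)\leq N(\sigma)$: my plan is to show $N$ is itself a non-negative integer Morse function with gradient field $W$, and then apply Proposition \ref{Prop:MinimalityConditionOfh}. The decisive tool is the acyclicity of $\mathcal{H}_{\mathcal{M}}$, explicitly recalled in the paper. For any $\sigma\prec\tau$: if $\tau=W(\sigma)$, the reversed matched arrow $\sigma\to\tau$ belongs to $\mathcal{H}_{\mathcal{M}}$, and prepending it to any simple path from $\tau$ produces a simple path from $\sigma$ with the same number of non-$\mathcal{M}_V$ edges (simplicity holds because $\tau$ cannot reach $\sigma$ without creating a directed cycle), so $N(\sigma)\geq N(\tau)$. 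If $\tau\neq W(\sigma)$, the normal arrow $\tau\to\sigma$ belongs to $\mathcal{H}_{\mathcal{M}}\setminus\mathcal{M}_V$; prepending it to any simple path from $\sigma$ produces a simple path from $\tau$ with exactly one additional non-$\mathcal{M}_V$ edge, so $N(\tau)\geq N(\sigma)+1$. These two facts directly give $V_N=W$ as well as the Morse conditions ($M1$) and ($M2$) for $N$, because by ($W2$) at most one facet ($W^{-1}(\sigma)$ when it exists) and one cofacet ($W(\sigma)$) of $\sigma$ can satisfy the equality-type inequality. Proposition \ref{Prop:MinimalityConditionOfh} then forces $h\leq N$, and combined with the first part this yields $h=N$.

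The main obstacle I foresee is that a direct translation of height chains into simple paths (the naive attack on $(\leq)$) runs into a delicate case analysis: inside a size-two $\dot{\sim}$-class the matched arrow of $\mathcal{H}_{\mathcal{M}}$ only points from small to large, so chain entries that land on the ``wrong'' side of a class would have to be reached by detours whose simplicity is not transparent. Routing the $(\leq)$ direction through the minimality of $h_f$ in Proposition \ref{Prop:MinimalityConditionOfh}, together with the acyclicity of $\mathcal{H}_{\mathcal{M}}$, sidesteps this case analysis entirely.
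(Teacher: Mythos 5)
Your proof is correct, but the second half takes a genuinely different route from the paper. For the inequality $h(\sigma)\geq N(\sigma)$ you do essentially what the paper does: read a simple path of $\mathcal{H}_{\mathcal{M}}$ as a descending chain for $\less$ and contract the matched (``inverted'') edges, which are exactly the steps realizing equivalences; your run decomposition just makes explicit the bookkeeping the paper compresses into one sentence. (One small citation slip: the fact that a non-matched incidence pair $\sigma\prec\tau$, $\tau\neq W(\sigma)$, satisfies $\sigma\dot{\nsim}\tau$ is not Remark \ref{Obs:equivalencetrio} but the first paragraph of the proof of Theorem \ref{teo:ismorse}; Remark \ref{Obs:equivalencetrio} is what you need for the claim that each run has at most two vertices, together with $W^2=0$.) For the converse inequality the paper implicitly relies on turning a height-realizing chain back into a simple path, and its proof only spells out the path-to-chain direction; you correctly identify that the chain-to-path direction is the delicate one and you avoid it altogether by showing that $N$ is itself a non-negative integer-valued Morse function with gradient $V$ --- using acyclicity of $\mathcal{H}_{\mathcal{M}}$ to keep the prepended paths simple --- and then invoking the minimality of $h$ from Proposition \ref{Prop:MinimalityConditionOfh}. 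This costs you a dependence on Proposition \ref{Prop:MinimalityConditionOfh} (harmless, since it precedes Proposition \ref{propo:hasse} in the paper), but buys a cleaner and arguably more complete argument for the $\leq$ direction than the one the paper sketches, and as a by-product it exhibits $N$ directly as a Morse function realizing the minimum, which is of independent interest.
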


\begin{proof}
A simple path in $\mathcal{H}_{\mathcal{M}}$ starting in $\sigma$ is a sequence $\sigma=\sigma_n\rightarrow\sigma_{n-1}\rightarrow\cdots\rightarrow\sigma_0$ where either \begin{enumerate}\item $\sigma_{i+1}\succ\sigma_i$ and $V(\sigma_i)\neq V(\sigma_{i+1})$ or \item $\sigma_{i+1}\prec\sigma_i$ and $V(\sigma_{i+1})=\sigma_i$.\end{enumerate} Hence, one such path gives rise to a sequence $\sigma_n\great \sigma_{n-1}\great\cdots\great \sigma_0$ where $\sigma_i\dot{\sim}\sigma_{i+1}$ only for the $i$'s corresponding to simplices in the case ($2$). By removing these arrows we get a chain with no equivalencies. Since these arrows are the ``inverted arrows" in $\mathcal{M}$, the result follows.
\end{proof}

Algorithm \ref{algoSAGE} is a SAGE implementation of the algorithm implicit in Proposition \ref{propo:hasse}.

For the second algorithm we rely on the alternative characterization of $h_f$ given in Proposition \ref{Prop:MinimalityConditionOfh}. Note that one can easily scale and perturb a Morse function $f$ to get another one with the same gradient and such that $\min_{\sigma\in K}\{f(\sigma)\}=0$ and $f(K)\subset\mathbb{Z}_{\geq 0}$. Algorithm \ref{algovalues} computes the normalization of one such function.
\begin{algorithm}
\caption{\emph{Normalization from a Morse function $f$}}\label{algovalues}
\begin{algorithmic}[1]
\Require{$f:K\rightarrow\mathbb{Z}_{\geq 0}$, $\min\{f(\sigma)\}=0$.}
\Ensure{$h_f$}
\Procedure{normalize}{$f$}
\State Order $K=\{\sigma_1,\ldots,\sigma_r\}$ such that $\left\{\begin{array}{ll}
f(\sigma_i)< f(\sigma_j)& i< j\\
\dim(\sigma_i)\geq\dim(\sigma_j)&f(\sigma_i)= f(\sigma_j)\end{array}\right.$
\State $f_0\gets f$
\For{$0\leq s< r$}
\State $f_s^0\gets f_s$
\For{$0< k\leq r$}
\State $f_s^k(\sigma)=f_s^{k-1}(\sigma)$ if $\sigma\neq\sigma_k$
\State $f_s^k(\sigma_k)=\left\{\begin{array}{ll}
\max\{f_s^{k-1}(\eta)\,|\,\eta\prec\sigma_k\}+1&\sigma_k\text{ is critical for $f_s^{k-1}$}\\
f_s^{k-1}(\tau)& f_s^{k-1}(\sigma_k)\geq f_s^{k-1}(\tau)\\
\max\{f_s^{k-1}(\eta')\,|\,\eta'\prec\sigma_k,\,\eta'\neq\eta\}+1&f_s^{k-1}(\eta)\geq f_s^{k-1}(\sigma_k)\end{array}\right.$
\EndFor
\State $f_{s+1}\gets f^r_s$
\EndFor
\State $h_f\gets f_r$
\EndProcedure
\end{algorithmic}
\end{algorithm}

\begin{prop} Algorithm \ref{algovalues} computes the normalization of $f$.\end{prop}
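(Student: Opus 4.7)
The plan is to exploit Proposition \ref{Prop:MinimalityConditionOfh}, which characterizes $h_f$ as the pointwise minimum among all non-negative integer Morse functions equivalent to $f$. I will establish three invariants simultaneously, by induction on the pair $(s,k)$ taken in lexicographic order: (a) each $f_s^k$ is a combinatorial Morse function with gradient $V_f$; (b) $f_s^k(\sigma)\leq f_s^{k-1}(\sigma)$ for every $\sigma\in K$; and (c) $f_s^k(\sigma)\geq h_f(\sigma)$ for every $\sigma\in K$.

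The first invariant is the technical heart of the argument. The three cases of the update rule correspond to the three possible roles of $\sigma_k$ under the $V_f$-pairing: critical, lower member of a pair (so $V_f(\sigma_k)=\tau$), or upper member of a pair (so $V_f(\eta)=\sigma_k$). In each case the formula prescribes the smallest value compatible with $(M1)$ and $(M2)$ at $\sigma_k$ while keeping $V_f$ intact; one must check that the reduction does not break a Morse condition at a face or coface of $\sigma_k$. The ordering chosen in line $2$ of the algorithm, by increasing $f$ with ties broken by \emph{decreasing} dimension, is crucial: it guarantees that the upper member of each $V_f$-pair is processed first, so that whenever $\sigma_k$ is updated its partner already has a value in the correct relative position. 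Invariant (b) is then immediate because $f_s^{k-1}(\sigma_k)$ itself satisfies the local constraints and the update selects the minimum such value; and invariant (c) follows from (a) together with Proposition \ref{Prop:MinimalityConditionOfh}.

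To finish, I need to show that $f_r = h_f$. Invariants (b) and (c) produce a monotone non-increasing sequence bounded below by $h_f$, so it converges to some limit $\bar f \geq h_f$. To bound the number of outer iterations needed, I will prove by induction on $n\geq 0$ the claim: as soon as $s\geq n$, every simplex $\sigma$ with $h_f(\sigma)=n$ satisfies $f_{s+1}(\sigma)=h_f(\sigma)$. The base case $n=0$ is Lemma \ref{Lemma:PropertiesOfh}\eqref{(2)}: critical vertices are driven to $0$ in the first outer pass. For the inductive step, given a chain $\sigma=\sigma_n\great\sigma_{n-1}\great\cdots\great\sigma_0$ realizing $h_f(\sigma)=n$, the induction hypothesis ensures that $\sigma_0,\ldots,\sigma_{n-1}$ have already stabilized at their $h_f$-values by the start of outer iteration $n$; the update rule at $\sigma$ then produces exactly $h_f(\sigma)$, matching the recursive structure of the height. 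Since $\max_{\sigma\in K} h_f(\sigma)\leq r-1$ by Lemma \ref{Lemma:PropertiesOfh}\eqref{(4)}, $r$ outer iterations suffice and $f_r=h_f$.

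The main obstacle is the inductive verification of (a): the update at $\sigma_k$ must respect the Morse conditions at the neighbours of $\sigma_k$ in the intermediate state $f_s^{k-1}$, a state in which some neighbours have already been modified in the current outer pass and others have not. A careful case analysis keyed to the $V_f$-pairing type of each neighbour (and whether it precedes or follows $\sigma_k$ in the prescribed ordering) is required; the ordering chosen in line $2$ is what makes this analysis go through, ensuring that the minimum prescribed by the formula is well-defined and genuinely attainable at each step.
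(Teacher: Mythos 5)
Your first half — the case analysis showing each $f_s^k$ is a Morse function with gradient $V_f$, together with the observation that the ordering in line 2 processes the upper member of each $V_f$-pair first — is exactly the paper's plan, and your auxiliary invariants (b) and (c) are correct and cheap (though you only describe the case analysis rather than carry it out, and that analysis is the bulk of the actual proof). The divergence, and the gap, is in how you conclude $f_r=h_f$. The paper does not argue level-by-level stabilization at all: it takes an arbitrary integer-valued Morse function $g$ equivalent to $f$, considers the \emph{earliest} simplex $\sigma=\sigma_k$ in $\mathcal{O}$ with $g(\sigma)<f_r(\sigma)$, and derives a contradiction in each of the three update cases; Proposition \ref{Prop:MinimalityConditionOfh} then forces $f_r=h_f$.

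Your route hinges on the sentence ``the update rule at $\sigma$ then produces exactly $h_f(\sigma)$, matching the recursive structure of the height,'' and that is precisely the step that is not available for free. The update rule computes $1+\max\{h_f(\eta)\mid\eta\prec\sigma\}$ (resp.\ $1+\max\{h_f(\eta')\mid\eta'\prec\sigma,\ \eta'\neq\eta\}$) over \emph{immediate} faces, whereas $h_f(\sigma)$ is by definition a maximum over chains $\sigma=\sigma_n\great\sigma_{n-1}\great\cdots\great\sigma_0$ in the \emph{transitive closure} $\great$, in which $\sigma_{n-1}$ need not be an immediate face of $\sigma$ and may only be reachable through a zig-zag of atomic relations passing through the $V_f$-partner of $\sigma$. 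The local recursions your induction needs are true, but proving them requires exactly the reduced-chain analysis that occupies the proof of Proposition \ref{Prop:MinimalityConditionOfh} (the decomposition $\sigma\dot{\sim}\eta\great\eta'\dot{\sim}\sigma_{r-1}$); as written, your inductive step assumes the hardest ingredient of the conclusion. A repair that stays close to your setup: use (b) and (c) to get a limit $\bar f\geq h_f$ that is a fixed point of the update, then show $\bar f\leq g$ for every equivalent integer Morse function $g$ by induction along $\mathcal{O}$ — which is, in substance, the paper's argument. Two smaller points: your base case needs the convention $\max\emptyset=-1$ so that critical vertices receive the value $0$, and the within-pass interaction between case (b) and case (c) simplices of equal height (the pair members share their $h_f$-value by Lemma \ref{Lemma:PropertiesOfh}\eqref{(3)}) needs to be ordered explicitly in your induction.
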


\begin{proof} First we show by induction that $f_s^k$ is a Morse function with gradient $V_f$ for all $s,k$. Since the loop corresponding to a given $s$ and $k$ only alters the value of $\sigma_k$, it suffices to show that the order relations between the values of $\sigma_k$ and its immediate faces and cofaces are preserved in the transition $f_s^{k-1}\rightarrow f_s^k$ (note that this comprises the transition between subindexes $s$ as well). For simplicity, let us write $\sigma_k=\sigma$, $f_s^k=f_k$ and let $\mathcal{O}$ stand for the order the algorithm introduces on the simplices of $K$. We analyze the possible cases of $\sigma$: ($a$) $\sigma$ is critical for $f_{k-1}$, ($b$) $f_{k-1}(\sigma)\geq f_{k-1}(\tau)$ for some $\tau\succ\sigma$ and ($c$) $f_{k-1}(\eta)\geq f_{k-1}(\sigma)$ for some $\eta\prec\sigma$.

In the case ($a$) a direct computation shows that for $\nu\prec\sigma\prec\rho$ we have \begin{itemize}
\item $f_k(\sigma)=\max\{f_{k-1}(\eta)\,|\,\eta\prec\sigma\}+1\leq f_{k-1}(\sigma)<f_{k-1}(\rho)=f_k(\rho)$ and
\item $f_k(\nu)=f_{k-1}(\nu)< \max\{f_{k-1}(\eta)\,|\,\eta\prec\sigma\}+1=f_k(\sigma).$ 
\end{itemize}

Suppose we are in the conditions of case ($b$). On one hand, $f_k(\sigma)=f_{k-1}(\tau)=f_k(\tau)$. On the other hand, since $f_{k-1}$ is Morse, if $\sigma\prec\rho\neq\tau$ then
$$f_k(\sigma)=f_{k-1}(\tau)\leq f_{k-1}(\sigma)<f_{k-1}(\rho)=f_k(\rho).$$ Finally, for $\nu\prec\sigma$ let $\sigma'\prec\tau$ be such that $\nu=\sigma\cap\sigma'$. Since $f_{k-1}$ is Morse then $f_{k-1}(\sigma')<f_{k-1}(\tau)$. In particular $\sigma'$ appears earlier than $\sigma$ in $\mathcal{O}$; so either $f_{k-1}(\nu)<f_{k-1}(\sigma')$ or $f_{k-1}(\nu)=f_{k-1}(\sigma')$ (if $f(\nu)\geq f(\sigma')$). In any case, $$f_k(\nu)=f_{k-1}(\nu)<f_{k-1}(\tau)=f_k(\sigma).$$

Let us now deal with case ($c$). If $\rho'\succ\sigma$ then
$$f_k(\sigma)=\max\{f_{k-1}(\eta')\,|\,\eta'\prec\sigma,\,\eta'\neq\eta\}+1<f_{k-1}(\sigma)<f_{k-1}(\rho')=f_k(\rho').$$ On the other hand, if $\sigma\succ\nu\neq\eta$ then 
$$f_k(\sigma)=\max\{f_{k-1}(\eta')\,|\,\eta'\prec\sigma,\,\eta'\neq\eta\}+1>f_{k-1}(\nu)=f_k(\nu).$$ Finally, using that $f_{k-1}$ is Morse, $$f_k(\sigma)=\max\{f_{k-1}(\eta')\,|\,\eta'\prec\sigma,\,\eta'\neq\eta\}+1\leq f_{k-1}(\sigma)\leq f_{k-1}(\eta)=f_k(\eta).$$
This proves that $f_r$ is a Morse function with the same gradient field as $f$. In order to show that $f_r=h_f$ we prove the minimality of $f_r$ according to Proposition \ref{Prop:MinimalityConditionOfh}. Assume there exists a Morse function $g:K\rightarrow\mathbb{Z}_{\geq 0}$ with $V_g=V_{f_r}$ contradicting this minimality and let $\sigma=\sigma_k$ be the earliest simplex in the order $\mathcal{O}$ such that $g(\sigma)<f_r(\sigma)$. If $\sigma$ is in case ($a$) then all the immediate faces of $\sigma$ appear earlier in $\mathcal{O}$ so $$\begin{array}{rcl}
g(\sigma)<f_r(\sigma)=f_k(\sigma)&=&\max\{f_{k-1}(\eta)\,|\,\eta\prec\sigma\}+1\\
&=&\max\{f_r(\eta)\,|\,\eta\prec\sigma\}+1\\
&\leq& \max\{g(\eta)\,|\,\eta\prec\sigma\}+1.\end{array}$$ Thus if $\nu\prec\sigma$ realizes this last maximum then $g(\sigma)\leq g(\nu)$, contrary to the hypothesis of ($a$). If $\sigma$ is in the conditions of ($b$) then $\tau$ appears earlier than $\sigma$ in $\mathcal{O}$ and we reach the following contradiction:
$$f_r(\tau)\leq g(\tau)\leq g(\sigma)<f_r(\sigma)=f_k(\sigma)=f_{k-1}(\tau)=f_r(\tau).$$
Finally, if we are in case ($c$), let $\nu\prec\sigma$ be such that $f_k(\nu)=\max\{f_{k-1}(\eta')\,|\,\eta'\prec\sigma,\,\eta'\neq\eta\}$. Since $\nu$ appears earlier than $\sigma$ in $\mathcal{O}$ then $$g(\nu)<g(\sigma)<f_r(\sigma)=f_k(\sigma_k)=f_{k-1}(\nu)+1=f_r(\nu)+1,$$ which contradicts the choice of $\sigma$ as a minimum.\end{proof}

\end{document}